\title{Spectral Recovery in the Labeled SBM}
\author{Julia Gaudio{$^\star$}, Heming Liu{$^\dagger$}}
\address[{$^\star$}]{Department of Industrial Engineering, Northwestern University}
\begin{document}

\begin{abstract}
We consider the problem of exact community recovery in the Labeled Stochastic Block Model (LSBM) with $k$ communities, where each pair of vertices is associated with a label from the set $\{0,1, \dots, L\}$. A pair of vertices from communities $i,j$ is given label $\ell$ with probability $p_{ij}^{(\ell)}$, and the goal is to recover the community partition. We propose a simple spectral algorithm for exact community recovery, and show that it achieves the information-theoretic threshold in the logarithmic-degree regime, under the assumption that the eigenvalues of certain parameter matrices are distinct and nonzero. Our results generalize recent work of Dhara, Gaudio, Mossel, and Sandon (2023), who showed that a spectral algorithm achieves the information-theoretic threshold in the Censored SBM, which is equivalent to the LSBM with $L = 2$. Interestingly, their algorithm uses eigenvectors from two matrix representations of the graph, while our algorithm uses eigenvectors from $L$ matrices.
\end{abstract}

\maketitle

\section{Introduction}
\label{sec:introduction}

Community detection is an important inference task, with applications to biological, physical, and social networks. The goal in community detection is to recover latent community structure in a graph. The Stochastic Block Model (SBM), introduced by Holland, Laskey, and Leinhardt \cite{Holland1983}, is considered the canonical probabilistic generative model for graphs with community structure. In the SBM, the vertices of a graph are randomly assigned a community label from the set $\{1, 2, \dots, k\}$, where $\pi_i$ denotes the probability that  given vertex has label $i$. Given these community assignments, a pair of vertices from communities $(i,j)$ are connected by an edge with probability $p_{ij}$, independently. The SBM has attracted significant attention in the probability, statistics, machine learning, and information theory literature; see the survey of Abbe \cite{Abbe2017}.

In this paper, we consider a generalization of the SBM, known as the Labeled Stochastic Block Model (LSBM), originally proposed by Heimlicher, Lelarge, and Massouli\'e \cite{Heimlicher2012}. In the LSBM, each pair of vertices $(u,v)$ is associated with a label $\mathcal{L}(u,v) \in \{0,1, \dots, L\}$. Each pair of communities $(i,j) \in [k]^2$ is associated with a label distribution, where $p_{ij}^{(\ell)}$ is the probability that a pair of vertices $(u,v)$ is given label $\mathcal{L}(u,v) = \ell$, conditioned on $u$ belonging to community $i$ and $v$ belonging to community $j$.
The goal is to recover the community partition, given the labels $\{\mathcal{L}(u,v)\}_{u,v \in V}$.

Our focus is on the exact recovery problem, and we consider a particular regime, roughly corresponding to the well-studied logarithmic degree regime in the SBM. We propose a simple spectral algorithm for recovering the community labels, and show that it achieves the information-theoretic threshold for almost all parameters. 

\paragraph*{Related Work}
The LSBM was introduced by Heimlicher, Lelarge, and Massouli\'e \cite{Heimlicher2012}, who investigated the sparse version of the problem (and therefore, the problem of partially recovering the communities). The sparse regime was further studied by Lelarge, Massouli\'e, and Xu \cite{Lelarge2015}, we found recovery guarantees for min-bisection, semidefinite programming, and a spectral algorithm. Subsequently, Yun and Proutiere \cite{Yun2016} studied partial, almost exact, and exact recovery in the LSBM. In particular, Yun and Proutiere \cite{Yun2016} determined the information-theoretic threshold for exact recovery, and provided an efficient algorithm achieving the information-theoretic limit. Our main contribution is to provide a simpler algorithm which also achieves the information-theoretic threshold (under a certain technical condition).

The LSBM is related to the Censored Stochastic Block Model (CSBM), which corresponds to the LSBM with $L= 2$. In the CSBM, we observe a censored graph, where we can only observe the connectivity status (present or absent edge) between some pairs of vertices, thus leading to three possible observations between pairs of vertices (present, absent, or censored). The CSBM has been studied in several works \cite{Abbe2014,Hajek2016,Dhara2022a,Dhara2022c}. The most relevant work to ours is the paper of Dhara, Gaudio, Mossel, and Sandon \cite{Dhara2022c}, who proposed a spectral algorithm for exact recovery in the CSBM. Their algorithm encodes the graph into two \emph{signed} adjacency matrices, each using a different ternary numerical encoding of the three possible observations. It turns out that using two matrix encodings is essential in achieving the information-theoretic threshold; for nearly all model parameters, any spectral algorithm based on a single matrix will not achieve the information-theoretic threshold \cite{Dhara2022c}. Our work shows that an appropriate generalization of their algorithm with $L$ matrices also succeeds down to the information-theoretic threshold, confirming the prediction of Dhara et. al. \cite{Dhara2022c} (under a distinctness of eigenvalues assumption).

Our work adds to the growing body of examples of spectral algorithms for community detection. Spectral algorithms for the SBM have a decades-long history, initiated in the work of McSherry \cite{McSherry2001}. However, it was only recently that Abbe, Fan, Wang, and Zhong \cite{Abbe2019} showed that a spectral algorithm achieves the information-theoretic threshold for exact recovery in the two-community SBM, without the need for a cleanup phase. The algorithm simply computes the second leading eigenvector of the adjacency matrix of the graph, and thresholds the entries at $0$ to assign communities. Following the work of Abbe et. al. \cite{Abbe2019}, several works proposed cleanup-free spectral algorithms for other inference problems: the planted dense subgraph model \cite{Dhara2022b}, submatrix localization \cite{Dhara2022b}, and community detection in the presence of side information, in both the SBM and Gaussian models \cite{Gaudio2024}. Spectral algorithms have been derived not only for adjacency matrices, but also for Laplacians \cite{Deng2021} and the so-called ``similarity matrix'' of a hypergraph \cite{Gaudio2023}.
The common theme in the analysis of these algorithms is \emph{entrywise eigenvector analysis}, developed by Abbe et. al. \cite{Abbe2019}; we crucially use these results in our analysis.

\noindent \textbf{Notation} 
The infinity norm of a vector $x$ is denoted $\Vert x \Vert_{\infty} := \max_i |x_i|$. Given two matrices $A,B$ with the same dimensions, the notation $\langle A , B \rangle := \text{tr}(A B^T)$ denotes the standard matrix inner product. We use Bachmann--Landau asymptotic notation in terms of the number of vertices, $n$.

\noindent \textbf{Organization} The rest of this paper is structured as follows. Section \ref{sec:model} formally introduces the LSBM along with the information-theoretic threshold for exact recovery. Section \ref{sec:algorithm} describes the spectral algorithm, and Section \ref{sec:proof} contains the proof of exact recovery.

\section{Model and Information-Theoretic Threshold}\label{sec:model}
\subsection{Model Definition}
We now describe the LSBM in the logarithmic degree regime. 
\begin{definition}
The LSBM is described by parameters $k, L \in \mathbb{N}$, $\pi \in \mathbb{R}^k$, $q_{ij}^{(\ell)}$ for $i,j \in [k], \ell \in [L]$, and $t > 0$. A graph on $n$ vertices is sampled as follows:
\begin{enumerate}
    \item Each vertex is independently assigned a community label, where label $i$ is assigned with probability $\pi_i$, for $i \in [k]$.
    \item Conditioned on the community assignments, each pair of vertices is given a label from the set $\{0,1, \dots, L\}$. A pair of vertices $(u,v)$ in communities $(i,j)$, is given label $\mathcal{L}(u,v) \in \{0, 1, \dots, L\}$. The label is $\ell$ with probability $\frac{t \log n}{n} \cdot q_{ij}^{(\ell)}$, for $\ell \in [L]$. With probability $1- \frac{t \log n}{n}$, the label is $0$.
\end{enumerate}
We denote a graph $G$ generated with these parameters by  $G \sim \text{LSBM}(\pi,q,t,n)$.
\end{definition}
Note that for every pair $i,j$, the values $\{q_{ij}^{(\ell)}\}_{\ell \in [L]}$ must correspond to a valid probability distribution; that is, $\sum_{\ell = 1}^{L} q_{ij}^{(\ell)} = 1$. With this parametrization, the $0$ label is most frequent. Since the $0$ label appears with the same probability for every pair $(i,j)$, it is also uninformative. Therefore, another interpretation of this model is that we observe useful information (i.e., a label from $\{1, 2, \dots, L\}$) for a given pair with probability $\frac{t \log n}{n}$, and the label is $\ell$ with probability $q_{ij}^{(\ell)}$ for a pair of vertices in communities $i,j$. In the case where $L=2$, this parameterization captures the version of the CSBM which was studied by \cite{Hajek2016,Dhara2022a,Dhara2022c}.

\subsection{Information-Theoretic Threshold}
The information-theoretic threshold is stated in terms of a Chernoff--Helllinger divergence \cite{Abbe2015}.
\begin{definition}
Let $x, y \in \mathbb{R}^m$ for some $m \in \mathbb{N}$. The Chernoff--Hellinger divergence between $x$ and $y$, denoted $D_+(x,y)$, is given by
\[D_+(x,y) = \sup_{\lambda \in [0,1]} \left\{\sum_{i=1}^m \lambda x_i + (1-\lambda) y_i - x_i^{\lambda} y_i^{1-\lambda} \right\}.\]
The definition straightforwardly extends to multidimensional (e.g., matrix-valued) $x$ and $y$. Also note that if $x$ and $y$ represent probability distributions, then
\begin{equation}
D_+(x,y) = 1 - \inf_{\lambda \in [0,1]} \left\{\sum_{i=1}^m x_i^{\lambda} y_i^{1-\lambda} \right\}.  \label{eq:D_+-prob}  
\end{equation}
\end{definition}
We say that an estimator achieves exact recovery if it determines the correct community partition with probability $1-o(1)$. The following result characterizes the information-theoretic threshold in the regime described above, and follows directly from \cite[Theorem 3]{Yun2016} (see the Appendix for details). See also \cite[Theorem 1]{Dreveton2023} which determines the IT threshold for a much more general setting.
\begin{theorem}\label{theorem:IT}
Consider $\pi \in \mathbb{R}^k$, $\{q_{ij}^{(\ell)}\}_{i,j \in [k], \ell \in [L]}$, and $t > 0$. Let $G_n \sim \text{LSBM}(\pi, q, t, n)$. For $i \in [k]$, let $\theta_i$ be the $k \times L$ matrix whose $(j,\ell)$ entry is $\pi_j q_{ij}^{(\ell)}$. Define 
\begin{equation}t_c = \left(\min_{i \neq j} D_+(\theta_i, \theta_j) \right)^{-1}. \label{eq:critical-t}
\end{equation}
\begin{enumerate}
    \item If $t > t_c$, then the MAP estimator achieves exact recovery. 
    \item If $t < t_c$, then any estimator fails to achieve exact recovery. Moreover, any estimator fails to recover the community partition with probability $1-o(1)$.
\end{enumerate}
\end{theorem}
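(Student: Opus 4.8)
The plan is to reduce the analysis of the MAP estimator to a collection of essentially independent per-vertex two-point hypothesis tests, following the genie-aided strategy standard for exact recovery in block models. Fix a vertex $v$ with true community $i$, and suppose an oracle reveals the communities $\sigma^*_u$ of all $u \neq v$. The MAP estimate of $v$'s label maximizes the posterior $\pi_a \prod_{u \neq v} P(\mathcal{L}(u,v) \mid a, \sigma^*_u)$ over $a \in [k]$, and since the prior contributes only an $O(1)$ additive term to the log-posterior it is negligible against the $\Theta(\log n)$ scale of the log-likelihood. Crucially, because a pair receives the uninformative label $0$ with probability $1 - \frac{t\log n}{n}$ regardless of the communities involved, the label-$0$ contributions cancel in every log-likelihood ratio, so only the counts of informative labels matter. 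For each community $c$ and label $\ell \in [L]$, the number of neighbors $u$ with $\sigma^*_u = c$ and $\mathcal{L}(u,v) = \ell$ is, under the truth $\sigma^*_v = i$, asymptotically $\mathrm{Poisson}(t \log n \cdot (\theta_i)_{c\ell})$, and these counts are independent across $(c,\ell)$. Comparing hypothesis $i$ against hypothesis $j$ thus amounts to a likelihood-ratio test between two product-Poisson laws with mean matrices $t\log n \cdot \theta_i$ and $t\log n \cdot \theta_j$.

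For achievability I would bound the per-vertex error by a Chernoff argument. For Poisson variables the optimally tilted exponent between $\mathrm{Poisson}(a)$ and $\mathrm{Poisson}(b)$ is $\sup_\lambda \{\lambda a + (1-\lambda)b - a^\lambda b^{1-\lambda}\}$; summing over $(c,\ell)$ and factoring out $t \log n$ recovers exactly $t \log n \cdot D_+(\theta_i, \theta_j)$, so the probability that vertex $v$ is preferentially assigned to $j$ is $n^{-t D_+(\theta_i,\theta_j)(1 + o(1))}$. When $t > t_c$ we have $t \, D_+(\theta_i, \theta_j) > 1$ for every ordered pair $i \neq j$, so each per-vertex, per-alternative error is $o(1/n)$; a union bound over the $n$ vertices and the $k(k-1)$ alternatives shows the genie-aided estimator, and hence MAP, classifies every vertex correctly with probability $1 - o(1)$.

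For the converse I would show that, when $t < t_c$, even the optimal (genie-aided) estimator fails. Let $(i^*, j^*)$ attain $\min_{i \neq j} D_+(\theta_i, \theta_j)$, so $t \, D_+(\theta_{i^*}, \theta_{j^*}) < 1$. A matching Chernoff lower bound --- obtained by tilting the product-Poisson measure to the optimal $\lambda^*$ and applying a local central limit estimate to the resulting law --- shows that a vertex truly in community $i^*$ is misclassified toward $j^*$ with probability at least $n^{-(1 - \varepsilon)}$ for some fixed $\varepsilon > 0$. Consequently the expected number of such misclassifiable vertices grows polynomially in $n$. A second-moment computation, using that the relevant neighborhood statistics of two distinct vertices are asymptotically independent, then yields that at least one misclassified vertex exists with probability $1 - o(1)$, so exact recovery is impossible for any estimator.

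The main obstacle is the sharp two-sided large-deviation control of the Poissonized log-likelihood ratio: the upper (Chernoff) bound is routine, but the matching lower bound with the correct constant requires a change of measure to the optimal exponential tilt together with a local limit theorem, and care is needed because the means $t\log n \cdot (\theta_i)_{c\ell}$ grow with $n$ while the decision boundary sits at the center of the tilted law. The near-independence needed for the second moment method is delicate but follows from the sparsity of informative labels. As an alternative --- and the route we take --- one can bypass these estimates by invoking \cite[Theorem 3]{Yun2016} directly and verifying in the Appendix that the divergence appearing there, specialized to our logarithmic-degree parametrization, equals $t \min_{i\neq j} D_+(\theta_i, \theta_j)$, at which point the stated threshold $t_c$ and both conclusions follow immediately.
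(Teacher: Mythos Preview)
Your final stated route---invoking \cite[Theorem 3]{Yun2016} and checking that its divergence specializes to $t\,D_+(\theta_i,\theta_j)$ in the logarithmic-degree regime---is exactly what the paper does: the Appendix verifies Assumptions~\ref{assumption1}--\ref{assumption3}, applies \cite[Claim 4]{Yun2016} to identify $D_{L+}(\pi,p_i,p_j)\sim \tfrac{t\log n}{n}D_+(\theta_i,\theta_j)$, and reads off both conclusions.

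The direct genie-aided argument you sketch first is not used for Theorem~\ref{theorem:IT} itself, though its achievability half (the Chernoff bound yielding per-vertex error $n^{-tD_+(\theta_i,\theta_j)(1+o(1))}$) reappears almost verbatim in the paper as Lemma~\ref{lemma:concentration}, where it is applied to exact Binomial rather than Poissonized counts. Your converse sketch via tilting and second moments is standard and would work, but the paper sidesteps it entirely by leaning on \cite{Yun2016}; the advantage of your direct route is self-containment, while the paper's citation-based proof avoids the local-limit and near-independence technicalities you correctly flag as the main obstacles.
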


\section{Spectral Algorithm}\label{sec:algorithm}
We propose a spectral algorithm for community recovery, which is an extension of the $L=2$ case covered by \cite{Dhara2022c}. At a high level, we construct $L$ matrix representations of the input graph, and compute multiple carefully weighted linear combinations of the leading eigenvectors of these matrices in order to infer the community partition. The design of our algorithm hinges on a precise entrywise control over the leading eigenvectors, using the machinery developed  by Abbe, Fan, Wang, and Zhong \cite{Abbe2019}.

In more detail, the $\ell^{\text{th}}$ matrix has the following form:
\begin{align}
A^{(\ell)}_{uv} =
\begin{cases}
0 & \mathcal{L}(u,v) \neq \ell \\
1 & \mathcal{L}(u,v) =\ell.
\end{cases} \label{eq:signed-adjacency-matrix}
\end{align}
Let $u_1^{(\ell)}, \dots,  u_k^{(\ell)}$ be the top $k$ leading eigenvectors of $A^{(\ell)}$. Informally, the results of Abbe et. al. \cite{Abbe2019} show that 
\[u^{(\ell)}_i \approx \frac{A^{(\ell)} u_i^{\star (\ell)}}{\lambda_i^{\star (\ell)}}\]
for $i \in [k]$, where $(\lambda_i^{\star (\ell)},u_i^{\star (\ell)})$ is the $i^{\text{th}}$ leading eigenpair of $\mathbb{E}[A^{(\ell)} \mid \sigma^{\star}]$, and $\approx$ indicates proximity in $\ell_{\infty}$. (For now we are disregarding the fact that eigenvectors are determined up to sign.) But then also 
\begin{equation}
\sum_{i=1}^k \sum_{\ell = 1}^L c_i^{(\ell)}u_i^{(\ell)} \approx \sum_{i=1}^k \sum_{\ell = 1}^L c_i^{(\ell)}\frac{A^{(\ell)} u_i^{\star (\ell)}}{\lambda_i^{\star (\ell)}} \label{eq:spectral-intuition}    
\end{equation}
for any constants $\{c_i^{(\ell)}: i \in [k], \ell \in [L]\}$. 

Observe that since $\mathbb{E}[A^{(\ell)} \mid \sigma^{\star}]$ is a block matrix, the vectors $u_i^{\star (\ell)}$ themselves have a block structure organized according to the community partition (that is, for a given community $m \in [k]$, we have that $u_{iw}^{\star (\ell)}$ takes a constant value for all $w$ such that $\sigma^{\star}(w) = m$). Now consider the $v^{\text{th}}$ entry of the vector on the right hand side of \eqref{eq:spectral-intuition}, corresponding to vertex $v$: \[\sum_{w \neq v} \sum_{i \in [k]} \sum_{\ell \in [L]} \frac{c_i^{(\ell)}}{\lambda_i^{\star (\ell)}}  A^{(\ell)}_{vw} u_{iw}^{\star (\ell)},\]
which we can write as
\small
\begin{align*}
\sum_{w \neq v} \sum_{j \in [k]} \sum_{\ell' \in [L]} \sum_{i \in [k]} \sum_{\ell \in [L]} \frac{c_i^{(\ell)}}{\lambda_i^{\star (\ell)}}  A^{(\ell)}_{vw} u_{iw}^{\star (\ell)}  \mathbbm{1}\left\{\sigma^{\star}(w) = j, \mathcal{L}(v,w) = \ell'  \right\}.    
\end{align*}
\normalsize
Consider some $j \in [k]$ and $\ell' \in [L]$. Due to the block structure of the vectors $u_i^{\star (\ell)}$, we see that the inner summation $\sum_{i \in [k]} \sum_{\ell \in [L]} \frac{c_i^{(\ell)}}{\lambda_i^{\star (\ell)}}  A^{(\ell)}_{vw} u_{iw}^{\star (\ell)} $ is constant for all $w$ with $\sigma^{\star}(w) = j, \mathcal{L}(v,w) = \ell'$. Therefore, the $v^{\text{th}}$ entry of \eqref{eq:spectral-intuition} is of the form 
\begin{align}
\sum_{w \neq v} \sum_{j \in [k]} \sum_{\ell' \in [L]} C_{j,\ell'} \mathbbm{1}\left\{\sigma^{\star}(w) = j, \mathcal{L}(v,w) = \ell'  \right\}&= \sum_{j \in [k]} \sum_{\ell \in [L]}  C_{j,\ell} \cdot |\{w : \sigma^{\star}(w) = j, \mathcal{L}(v,w) = \ell \}|. \label{eq:spectral-form} 
\end{align}
That is, the $v^{\text{th}}$ entry is a weighted sum of label counts incident to $v$. We will see that certain choices of coefficients $\{C_{j,\ell}: j \in [k], \ell \in [L]\}$ are useful for clustering. In turn, we can design the weights $\{c_i^{(\ell)} : i \in [k], \ell \in [L]\}$ to achieve the desired coefficients. 

To determine the coefficients, we turn to the so-called \emph{genie estimator}. Fixing a vertex $v$, the genie estimator is tasked with determining the label of $v$, knowing $\sigma^{\star}(w)$ and $\mathcal{L}(v,w)$ for all $w \neq v$. The genie estimator thus computes the MAP estimator for $v$; that is,
\begin{align*}
&\argmax_{i \in [k]}\left\{ \pi(i) \prod_{j \in [k]} \prod_{\ell \in [L]} \left(q_{i,j}^{(\ell)}\right)^{|\{w \neq v: \sigma^{\star}(w) = j, \mathcal{L}(v,w) = \ell\}|} \right\}\\
&=\argmax_{i \in [k]} \Big\{\log\left(\pi(i) \right)  +  \sum_{j \in [k]} \sum_{\ell \in [L]}\log\left(q_{ij}^{(\ell)} \right)  |\{w \neq v: \sigma^{\star}(w) = j, \mathcal{L}(v,w) = \ell\}| \Big\}.
\end{align*}
Observe that the $\log(\pi(i))$ term is insignificant, meaning that the MAP is well-approximated by the MLE. 
Comparing the above to \eqref{eq:spectral-form}, we see that the spectral algorithm is able to mimic the form of the genie estimator. Essentially, the values $\{\log (q_{ij}^{(\ell)}) : j \in [k], \ell \in [L]\}$ take the place of $\{C_{j,\ell} : j \in [k], \ell \in [L]\}$ in \eqref{eq:spectral-form}, for each $i \in [k]$ in turn. The algorithm is as follows.
\begin{breakablealgorithm}
\caption{Spectral Algorithm}\label{alg:spectral}
\begin{algorithmic}[1]
\Require{Parameters $\pi \in (0,1)^k$ and $\{q_{ij}^{(\ell)} : i,j \in [k], \ell \in [L]\}$; graph $G$ on $n$ vertices}
\vspace{.2cm}
\Ensure{Community classification $\hsig\in [k]^n$.}
\vspace{0.2cm}
    \State For each $\ell \in [L]$, let $A^{(\ell)}$ be constructed from $G$ according to \eqref{eq:signed-adjacency-matrix}.
    \State For each $\ell \in [L]$, find the top $k$ eigenpairs of $A^{(\ell)}$, respectively denoting them $(\lambda_1^{(\ell)}, u_1^{(l)}), \dots, (\lambda_k^{(l)}, u_k^{(l)})$. Let $U^{(l)}$ be the $n \times k$ matrix whose $i$-th column is $u_i^{(l)}$. 
    \State\label{step:weights}Use Algorithm \ref{alg:find-weights} to compute the weights  $\{c_{ij}^{(\ell)} : i,j \in [k], \ell \in [L]\} \subset \mathbb{R}$. Let $c_i^{(\ell)} \in \mathbb{R}^k$ be the vector whose $j^{\text{th}}$ entry is $c_{ij}^{(\ell)}$.
    \State \label{step:argmax} For $s^{(l)} \in \{\pm 1\}^{k}$, let $D^{\sss (s^{(l)})}:= \diag(s^{(l)})$. Construct the estimator 
    \begin{equation}\label{estimator-k-com}
    \hat{\sigma}(v;s^{(1)}, \dots, s^{(L)}) = \argmax_{i \in [k]} \left\{ \sum_{\ell=1}^L\left(U^{(\ell)}  D^{\sss (s^{(\ell)})}  c_i^{(\ell)}\right)_v \right\}.
    \end{equation}
    \State Return $\hsig$ which maximizes the posterior probability $\mathbb{P}(\hat{\sigma} \mid G )$ over all $\sigma \in \{\sigma(\cdot; s^{(1)}, \dots, s^{(L)}) : s^{(1)}, \dots s^{(L)} \in \{\pm 1\}^k\}$\label{step:find-signs}
\end{algorithmic}
\end{breakablealgorithm}
Since eigenvectors are determined up to sign, we need to allow for all possible signs; this is accomplished by introducing a set of diagonal matrices in Line \ref{step:argmax}, and dissambiguating the signs in Line \ref{step:find-signs} by taking the estimator with the highest posterior probability. The algorithm for finding the weights is given below.

\begin{breakablealgorithm}
\caption{Find Weights}\label{alg:find-weights}
\begin{algorithmic}[1]
\Require{Parameters $\pi \in (0,1)^k$ and $\{q_{ij}^{(\ell)} : i,j \in [k], \ell \in [L]\}$}
\vspace{.2cm}
\Ensure{Weights $\{c_i^{(\ell)} : i \in [k], \ell \in [L]\}$}
\vspace{0.2cm}
\State For $m \in [k]$, let $\cV_m:= \{i: n \sum_{j=0}^{m-1} \pi_j \leq i\leq  n \sum_{j=1}^{m} \pi_j \}$ with $\pi_0 =0$.
 \State For $\ell \in [L]$, let $B^{(\ell)} \in \mathbb{R}^{n \times n}$ be a symmetric block matrix, where 
 \[B^{(\ell)}_{uv} = \frac{t \log n}{n}q_{ij}^{(\ell)}\]
 for $u \in \cV_i, v \in \cV_j$.
\State Compute the top $k$ eigenpairs of $\{B^{(\ell)}\}_{\ell = 1}^L$, respectively denoting them by $\{(\gamma_1^{(\ell)}, v_1^{(\ell)}),\dots, (\gamma_k^{(\ell)}, v_k^{(l)})\}_{\ell=1}^L$. 
\State For $i \in [k], \ell \in [L]$, let $z_{i}^{(\ell)}$ be a block vector with $z_{i,v}^{(\ell)} = \log \left(q_{ij}^{(\ell)} \right)$ for $v \in \cV_j$.
 
\State\label{step:z} Return $\{c_{ij}^{(\ell)} : i,j \in [k], \ell \in [L]\} \subset \mathbb{R}$ satisfying 
\begin{align*}
\sqrt{n} \log(n) \left(\sum_{j=1}^k  c_{ij}^{(\ell)} \frac{v_j^{(\ell)}}{\gamma_j^{(\ell)}} \right) &= z_i^{(\ell)}  & \forall i \in [k], \ell \in [L].
\end{align*}
\end{algorithmic}
\end{breakablealgorithm}
In Algorithm \ref{alg:find-weights}, the matrix $B^{(\ell)}$ acts as a substitute for $A^{(\ell) \star} = \mathbb{E}[A^{(\ell)} \mid \sigma^{\star}]$, which is unknown. In turn, the eigenpairs of the matrices $\{B^{(\ell)}\}_{\ell = 1}^L$ allow us to solve for the weights $\{c_{ij}^{(\ell)}\}_{i,j \in [k], \ell \in [L]}$ that yield the vectors $\{z_i^{(\ell)}\}_{i\in [k], \ell \in [L]}$ in Step \ref{step:z}. The values of the $z_{i}^{(\ell)}$ vectors are chosen so that for each $i$ and $v \in V$, the sum $\sum_{\ell = 1}^L A_{v\cdot }^{(\ell)} \cdot z_{i}^{(\ell)}$ is proportional to $\langle W(i), d(v) \rangle$. 

The following result gives a guarantee for the success of Algorithm \ref{alg:spectral}. Let $Q^{(\ell)} \in \mathbb{R}^{k \times k}$ be the matrix where $Q_{ij}^{(\ell)} = q_{ij}^{(\ell)}$.
\begin{theorem}\label{theorem:spectral}
Consider $\pi \in \mathbb{R}^k$, $\{q_{ij}^{(\ell)}\}_{i,j \in [k], \ell \in [L]}$, and $t > t_c$ where $t_c$ is defined in \eqref{eq:critical-t}.  Suppose that $Q^{(\ell)} \cdot \diag(\pi)$ has $k$ distinct, nonzero eigenvectors, for each $\ell \in [L]$. Let $G \sim \text{LSBM}(\pi, q, t, n)$. Then with probability $1-o(1)$, Algorithm \ref{alg:spectral} with input $(\pi, q, G)$ exactly recovers the community partition.
\end{theorem}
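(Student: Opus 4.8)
\emph{Proof plan.} The strategy is to show that, for one particular (unknown) choice of signs $s^{(1)},\dots,s^{(L)}$, the score vectors assembled in \eqref{estimator-k-com} reproduce the decisions of the genie estimator up to a uniform additive $o(1)$ error, and then to argue that the genie recovers $\sigma^{\star}$ exactly while the posterior-maximization in Line~\ref{step:find-signs} locks onto the correct sign pattern. First I would reduce to the genie. Writing $N_{j\ell}(v):=|\{w\neq v:\sigma^{\star}(w)=j,\ \mathcal L(v,w)=\ell\}|$, the (MLE) genie assigns
\[
\hat\sigma_{\mathrm{gen}}(v)=\argmax_{i\in[k]}\Big\{\sum_{j\in[k]}\sum_{\ell\in[L]}\log\big(q_{ij}^{(\ell)}\big)\,N_{j\ell}(v)\Big\}.
\]
A first-moment computation shows that when $t>t_c$ the expected number of vertices at which $\hat\sigma_{\mathrm{gen}}\neq\sigma^{\star}$ tends to $0$; this is the computation underlying the achievability half of Theorem~\ref{theorem:IT}, the per-vertex error exponent being $\min_{i\neq j}D_+(\theta_i,\theta_j)$, and the $O(1)$ prior term $\log\pi_i$ affects only constants, not this exponent. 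I would in fact establish the slightly stronger, robust statement that, for a small constant $\delta>0$, w.h.p.\ every vertex has genie margin (the gap between the largest and second-largest bracketed score) either at most $0$ or at least $\delta$; on the former event there are no vertices, while the latter is exactly what allows an $o(1)$ perturbation of the scores to leave each $\argmax$ unchanged.

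The core of the argument is entrywise eigenvector control. For each $\ell$, the matrix $A^{(\ell)}$ has independent upper-triangular entries with $\mathbb E[A^{(\ell)}\mid\sigma^{\star}]=A^{(\ell)\star}$, a rank-$k$ block matrix whose nonzero eigenvalues are those of $t\log n\cdot Q^{(\ell)}\diag(\pi)$ up to $o(\log n)$ corrections arising from the fluctuation of community sizes. The hypothesis that $Q^{(\ell)}\diag(\pi)$ has $k$ distinct nonzero eigenvalues supplies both a spectral gap of order $\log n$ between $\lambda_k^{\star(\ell)}$ and the null space and, crucially, pairwise separation of the top-$k$ eigenvalues, so that \emph{individual} eigenvectors (not merely the top-$k$ subspace) are stable. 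I would verify the incoherence and regularity hypotheses of the entrywise perturbation theory of Abbe, Fan, Wang, and Zhong \cite{Abbe2019} and conclude that, for the correct per-eigenvector sign $s_i^{(\ell)}$,
\[
\Big\|\,s_i^{(\ell)}u_i^{(\ell)}-\tfrac{A^{(\ell)}u_i^{\star(\ell)}}{\lambda_i^{\star(\ell)}}\Big\|_\infty=o\!\Big(\tfrac{1}{\sqrt n\,\log n}\Big),\qquad i\in[k],\ \ell\in[L].
\]
Since $k$ and $L$ are constants, summing these bounds against the weights $c_i^{(\ell)}$ transfers the estimate to the full linear combination appearing in \eqref{eq:spectral-intuition}.

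It then remains to identify the scaled population combination with the genie score. Here I would use the block-structure reduction leading to \eqref{eq:spectral-form}: the $v$-th entry of $\sqrt n\log n\sum_{\ell}\sum_{j}c_{ij}^{(\ell)}A^{(\ell)}u_j^{\star(\ell)}/\lambda_j^{\star(\ell)}$ is a weighted label count $\sum_{j,\ell}C_{j\ell}N_{j\ell}(v)$, and the defining linear system of Algorithm~\ref{alg:find-weights} (solvable precisely because each $\gamma_j^{(\ell)}\neq 0$ and the block eigenvectors $v_j^{(\ell)}$ form a basis, both guaranteed by the distinct-nonzero-eigenvalue hypothesis) is engineered in Line~\ref{step:z} so that $C_{j\ell}=\log q_{ij}^{(\ell)}$ when solving for class $i$. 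Passing from the population eigenpairs of $A^{(\ell)\star}$ to those of the deterministic block matrix $B^{(\ell)}$ used by the algorithm costs only lower-order terms, again by concentration of the sizes $|\mathcal V_m|$ around $\pi_m n$ (the block-values, which are all that enter the score, depend only on communities). Combining this with the entrywise bound of the previous paragraph, the score in \eqref{estimator-k-com} at the correct signs equals the genie score plus $o(1)$, uniformly in $v\in[n]$ and $i\in[k]$; by the robust margin statement the two $\argmax$ operations therefore coincide, so this candidate estimator equals $\sigma^{\star}$ w.h.p.

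Finally, the algorithm does not know the correct signs, but it enumerates all $2^{kL}$ (a constant number of) sign patterns and, in Line~\ref{step:find-signs}, returns the candidate of maximal posterior probability. The correct-sign candidate equals $\sigma^{\star}$ by the above, while Theorem~\ref{theorem:IT} gives that $\sigma^{\star}$ uniquely maximizes $\mathbb P(\sigma\mid G)$ over \emph{all} partitions w.h.p., hence over the finite candidate list; thus Algorithm~\ref{alg:spectral} outputs $\sigma^{\star}$ (up to the usual global relabeling of communities). I expect the main obstacle to be the entrywise step: obtaining the sharp $o(1/(\sqrt n\log n))$ accuracy needed so that the $\sqrt n\log n$ rescaling leaves an $o(1)$ error against a constant-order margin, while simultaneously reconciling the three matrices $A^{(\ell)}$, $A^{(\ell)\star}$, and $B^{(\ell)}$ and tracking the per-eigenvector sign indeterminacy. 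The distinctness and nonvanishing of the eigenvalues of $Q^{(\ell)}\diag(\pi)$ is exactly what keeps the individual eigenvectors and the weight-defining linear system well conditioned enough for this to go through, in the same spirit as the $L=2$ analysis of \cite{Dhara2022c}.
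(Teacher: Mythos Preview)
Your plan is essentially the paper's proof: entrywise control of each $u_i^{(\ell)}$ via \cite{Abbe2019} (the paper packages this as Lemma~\ref{lemma:entrywise}), combine with the Algorithm~\ref{alg:find-weights} weights to reproduce the genie scores $\langle W(i),d(v)\rangle$, invoke a degree-profile margin lemma (the paper's Lemma~\ref{lemma:concentration}), and finish with the MAP selection over the $2^{kL}$ sign patterns.

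One quantitative calibration is off, and as stated it would make the step you flag as ``the main obstacle'' genuinely unreachable. You ask for
\[
\Big\|\,s_i^{(\ell)}u_i^{(\ell)}-\tfrac{A^{(\ell)}u_i^{\star(\ell)}}{\lambda_i^{\star(\ell)}}\Big\|_\infty=o\!\Big(\tfrac{1}{\sqrt n\,\log n}\Big)
\]
so that after the $\sqrt n\log n$ rescaling you land $o(1)$ away from a \emph{constant} genie margin $\delta$. But the entrywise bound actually available in this regime (Lemma~\ref{lemma:entrywise}, from \cite[Corollary~35]{Dhara2022c}) is only $C/(\sqrt n\,\log\log n)$, which after rescaling becomes $C\log n/\log\log n\to\infty$; pushing the Abbe--Fan--Wang--Zhong machinery down to $o(1/(\sqrt n\log n))$ here is likely not possible. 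What saves the argument is that the genie margin is not constant but of order $\log n$: the counts $N_{j\ell}(v)$ are $\Theta(\log n)$, and Lemma~\ref{lemma:concentration} (a Chernoff computation with exponent $tD_+(\theta_i,\theta_j)>1$) gives $\langle W(i)-W(j),d(v)\rangle\geq\delta\log n$ simultaneously for every vertex, w.h.p. So the correct accounting is an $o(\log n)$ error against a $\delta\log n$ margin. Swap in these scales and your outline goes through verbatim; trying to prove the stronger entrywise bound is both unnecessary and the wrong place to spend effort.
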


\section{Proof of Exact Recovery}\label{sec:proof}

The proof of Theorem \ref{theorem:spectral} relies on two key results. The first is a characterization of the leading eigenvectors of adjacency matrices. Throughout, we fix an assignment $\sigma^{\star}$ such that the community sizes are close to their expectations. More precisely, letting $n_j = n_j(\sigma^{\star}) = |\{v : \sigma^{\star}(v) = j\}|$, we assume
\begin{equation}
|n_j - n\pi_j| \leq n^{2/3}  \label{eq:balanced-sigma}  
\end{equation}
(which holds with high probability, as each $n_j$ is marginally distributed as $\text{Bin}(n,\pi_j)$).
\begin{lemma}\label{lemma:entrywise}
Fix $\sigma^{\star}$ satisfying \eqref{eq:balanced-sigma}, and let $\ell \in [L]$. Let $A = A^{(\ell)}$, recalling  \eqref{eq:signed-adjacency-matrix}. Let $A^{\star} = \mathbb{E}[A\mid \sigma^{\star}]$. Let $(\lambda_i, u_i)_{i=1}^k$ and $(\lambda_i^{\star}, u_i^{\star})_{i=1}^k$ be the $k$ leading eigenpairs of $A$ and $A^{\star}$, respectively. Suppose that $Q^{(\ell)} \cdot \diag(\pi)$ has $k$ distinct, nonzero eigenvalues. Then for all $i \in [k]$,
\[\min_{s \in \{\pm 1\}} \left \Vert s u_i - \frac{A u_i^{\star}}{\lambda_i^{\star}} \right \Vert_{\infty} \leq \frac{C}{\log \log(n) \sqrt{n}}\]
with probability $1-O(n^{-3})$.
Here $C > 0$ is a constant depending on $\{q_{ij}^{(\ell)}\}_{i,j \in [k]}$ and $\{\pi_i\}_{i \in [k]}$.
\end{lemma}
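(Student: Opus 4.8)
The plan is to derive Lemma~\ref{lemma:entrywise} as a direct application of the entrywise eigenvector perturbation theory of Abbe, Fan, Wang, and Zhong~\cite{Abbe2019}, after first establishing that the signal matrix $A^{\star}$ has the spectral profile their hypotheses demand. I would begin by analyzing $A^{\star}$ itself. Writing $Z \in \{0,1\}^{n \times k}$ for the membership matrix determined by $\sigma^{\star}$, the definition \eqref{eq:signed-adjacency-matrix} gives $A^{\star}_{uv} = \frac{t \log n}{n} q^{(\ell)}_{\sigma^{\star}(u)\sigma^{\star}(v)}$ for $u \neq v$, so $A^{\star} = \frac{t \log n}{n}\bigl(Z Q^{(\ell)} Z^{\top} - D\bigr)$, where $D$ is a diagonal correction of spectral norm $O(\log n / n) = o(1)$. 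The nonzero eigenvalues of $Z Q^{(\ell)} Z^{\top}$ coincide with those of $Q^{(\ell)} Z^{\top} Z = Q^{(\ell)} \diag(n_1, \dots, n_k)$, and by \eqref{eq:balanced-sigma} we have $n_j = n \pi_j (1 + o(1))$, so the nonzero eigenvalues of $A^{\star}$ are $t \log n \cdot \nu_i (1 + o(1))$, where $\nu_1, \dots, \nu_k$ are the eigenvalues of $Q^{(\ell)} \diag(\pi)$ (which are real, as $Q^{(\ell)}$ is symmetric and $Q^{(\ell)}\diag(\pi)$ is similar to $\diag(\pi)^{1/2} Q^{(\ell)} \diag(\pi)^{1/2}$). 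The distinct-and-nonzero hypothesis then guarantees that $A^{\star}$ has exactly $k$ nonzero eigenvalues, each of order $\log n$, with all pairwise gaps and the gap to the zero bulk of order $\log n$; hence the spectral gap is $\Delta^{\star} = \Theta(\log n)$ and the condition number of the nonzero part is $\Theta(1)$. The eigenvectors $u_i^{\star}$ are block-constant on communities, so $\Vert u_i^{\star} \Vert_{\infty} = \Theta(1/\sqrt{n})$ and $A^{\star}$ is incoherent.

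With this structure in hand I would verify the two probabilistic inputs that \cite{Abbe2019} requires of the noise $A - A^{\star}$. The spectral-norm bound $\Vert A - A^{\star} \Vert = O(\sqrt{\log n})$ holds with probability $1 - O(n^{-3})$ by standard concentration for the centered adjacency matrix of an inhomogeneous Erd\H{o}s--R\'enyi graph whose maximum expected degree is $\Theta(\log n)$. The row-concentration condition amounts to controlling $\max_m \lvert \langle (A - A^{\star})_{m \cdot}, w \rangle \rvert$ for $w$ ranging over normalized columns of the eigenvector matrix: since the $m^{\text{th}}$ row of $A - A^{\star}$ has independent, mean-zero entries of variance $O(\log n / n)$ and $\Vert w \Vert_{\infty} = O(1/\sqrt{n})$, Bernstein's inequality gives a deviation of order $\frac{\log n}{\sqrt{n}}$ with probability $1 - O(n^{-4})$ per row, and a union bound over the $n$ rows yields $1 - O(n^{-3})$. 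These facts, together with the incoherence and gap estimates, furnish the hypotheses of the entrywise theorem of \cite{Abbe2019}.

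Applying that theorem then yields, up to a global sign,
\[
\min_{s \in \{\pm 1\}} \left\Vert s u_i - \frac{A u_i^{\star}}{\lambda_i^{\star}} \right\Vert_{\infty} \lesssim \left( \frac{\Vert A - A^{\star} \Vert}{\lambda_i^{\star}} + \frac{\varphi}{\lambda_i^{\star}} \right) \Vert u_i^{\star} \Vert_{\infty},
\]
where the constant absorbs the condition number $\kappa = \Theta(1)$ and $\varphi = O(\sqrt{\log n})$ is the row-fluctuation quantity controlling $\max_m \Vert (A - A^{\star})_{m\cdot} \Vert_2$. Substituting $\lambda_i^{\star} = \Theta(\log n)$, $\Vert A - A^{\star}\Vert = O(\sqrt{\log n})$, $\varphi = O(\sqrt{\log n})$, and $\Vert u_i^{\star}\Vert_{\infty} = \Theta(1/\sqrt{n})$ collapses the right-hand side to $O\bigl( \frac{1}{\sqrt{\log n}} \cdot \frac{1}{\sqrt{n}} \bigr)$. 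Since $\sqrt{\log n} \geq \log\log(n)$ for all large $n$, this is bounded by $\frac{C}{\log\log(n)\sqrt{n}}$, which is the claimed rate; the $\log\log n$ formulation is thus a convenient relaxation of the natural $\sqrt{\log n}$ bound, with slack to spare for the subsequent genie comparison.

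I expect the main obstacle to be the row-concentration step, executed carefully enough to reach the $O(n^{-3})$ failure probability while tracking the logarithmic factors correctly: both the variance and the boundedness parameters fed into Bernstein carry powers of $\log n$, and the estimate must hold uniformly over all candidate vectors $w$ arising in the iterative (leave-one-out) scheme of \cite{Abbe2019}, not merely for a single fixed $w$. Two secondary technicalities are benign: the sign ambiguity is already captured by the $\min_{s \in \{\pm 1\}}$ in the cited theorem, and the diagonal correction $D$ perturbs the spectrum by $O(\log n / n) = o(1)$, far below $\Delta^{\star}$, so it affects none of the estimates above.
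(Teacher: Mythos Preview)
Your proposal is correct and follows essentially the same approach as the paper: the paper simply invokes \cite[Corollary 35]{Dhara2022c} and \cite[Remark 44]{Dhara2022c}, which package exactly the argument you outline---verifying the spectral gap, incoherence, and row-concentration hypotheses needed to apply the entrywise eigenvector theorem of \cite{Abbe2019} in the $\Theta(\log n / n)$ regime. Your unpacked version recovers the same $O\bigl(1/(\sqrt{n\log n})\bigr)$ rate, which indeed dominates the stated $C/(\log\log(n)\sqrt n)$ bound.
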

The proof follows directly from \cite[Corollary 35]{Dhara2022c} together with \cite[Remark 44]{Dhara2022c}, and relies on the entrywise eigenvector analysis of \cite{Abbe2019}

The second key result (which is similar to \cite[Corollary 19]{Dhara2022c}) ensures the separation of \emph{degree profiles} arising from different communities. For a vertex $v$, let $d(v) \in \mathbb{R}^{k \times L}$ be the \emph{degree profile} of $v \in V$, where the $(j,\ell)$ entry of $d(v)$ is equal to $|\{u : \sigma^{\star}(u) = j, \mathcal{L}(u,v) = \ell\}|$. 
\begin{lemma}\label{lemma:concentration}
For each $i \in [k]$, let $W(i) \in \mathbb{R}^{k \times L}$ be the matrix whose $(j,\ell)$ entry is equal to $\log(q_{ij}^{(\ell)})$. Suppose $t > t_c$. Then there exists $\delta > 0$ such that with probability $1-o(1)$, 
\[\langle W(i) , d(v) \rangle \geq \max_{j \neq i} \langle W(j) , d(v)\rangle + \delta \log n\]
for all $i \in [k]$ and $v$ such that $\sigma^{\star}(v) = i$.
\end{lemma}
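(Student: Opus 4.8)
The plan is to recognize $\langle W(i), d(v)\rangle$ as the (MLE part of the) genie log-likelihood for classifying $v$, and to bound the probability that some wrong community $j \neq i$ comes within $\delta\log n$ of the true community $i$, by a union bound over all vertices and competing labels in which each individual probability is controlled by a Chernoff argument whose exponential rate is exactly the Chernoff--Hellinger divergence. Fix $v$ with $\sigma^\star(v) = i$ and condition on $\sigma^\star$. Writing $X_{m,\ell} := d(v)_{m,\ell}$, for each fixed $m$ the vector $(X_{m,0},\dots,X_{m,L})$ is multinomial on $n_m$ (or $n_m-1$) trials with probabilities $(1 - \tfrac{t\log n}{n}, \tfrac{t\log n}{n}q_{im}^{(1)}, \dots, \tfrac{t\log n}{n}q_{im}^{(L)})$, and these vectors are independent across $m \in [k]$ since they count disjoint vertex sets. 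The margin for competitor $j$ fails precisely when $\langle W(j) - W(i), d(v)\rangle > -\delta\log n$, that is, when $\sum_{m,\ell} w_{m,\ell} X_{m,\ell} > -\delta\log n$ with $w_{m,\ell} := \log(q_{jm}^{(\ell)}/q_{im}^{(\ell)})$.

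For each such pair I would apply a Chernoff bound: for $\lambda \ge 0$,
\[
\mathbb{P}\Big(\textstyle\sum_{m,\ell} w_{m,\ell}X_{m,\ell} > -\delta\log n\Big) \le n^{\lambda\delta}\prod_{m\in[k]}\mathbb{E}\big[e^{\lambda\sum_\ell w_{m,\ell}X_{m,\ell}}\big].
\]
Using the multinomial moment generating function and the identity $q_{im}^{(\ell)}(q_{jm}^{(\ell)}/q_{im}^{(\ell)})^\lambda = (q_{im}^{(\ell)})^{1-\lambda}(q_{jm}^{(\ell)})^\lambda$, each factor equals $\big(1 - \tfrac{t\log n}{n}(1 - \sum_\ell (q_{im}^{(\ell)})^{1-\lambda}(q_{jm}^{(\ell)})^\lambda)\big)^{n_m}$. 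I would then invoke $1 - x = e^{-x + O(x^2)}$ together with $|n_m - n\pi_m| \le n^{2/3}$ (so that $n_m\tfrac{t\log n}{n} = \pi_m t\log n + o(1)$) to reduce the exponent to
\[
-t\log n\sum_{m\in[k]}\pi_m\Big(1 - \sum_\ell (q_{im}^{(\ell)})^{1-\lambda}(q_{jm}^{(\ell)})^\lambda\Big) + \lambda\delta\log n + o(\log n).
\]
Choosing $\lambda$ to maximize the first term over $[0,1]$ and using $\sum_\ell q_{im}^{(\ell)} = \sum_\ell q_{jm}^{(\ell)} = 1$ and $\pi_m = \pi_m^\lambda\pi_m^{1-\lambda}$, this rate matches $-t\,D_+(\theta_i,\theta_j)\log n$ after the substitution $\lambda \mapsto 1-\lambda$ in the definition of $D_+$, recalling $(\theta_i)_{m,\ell} = \pi_m q_{im}^{(\ell)}$.

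Consequently, taking $\lambda$ to be the Chernoff--Hellinger maximizer, which lies in $[0,1]$ by concavity of $\lambda \mapsto \sum_{m,\ell}(q_{im}^{(\ell)})^{1-\lambda}(q_{jm}^{(\ell)})^\lambda$, I obtain
\[
\mathbb{P}\big(\langle W(i), d(v)\rangle < \langle W(j), d(v)\rangle + \delta\log n\big) \le n^{-t D_+(\theta_i,\theta_j) + \delta + o(1)}.
\]
Since $t > t_c$, we have $t\,D_+(\theta_i,\theta_j) \ge t/t_c > 1$ for every $i \neq j$, with slack bounded below uniformly over the finitely many pairs. Fixing $\delta \in (0, t/t_c - 1)$ makes each exponent at most $-(1+\varepsilon)$ for some fixed $\varepsilon > 0$, so each failure probability is $O(n^{-1-\varepsilon})$; a union bound over the at most $n$ vertices and $k-1$ competing labels yields total failure probability $O(kn^{-\varepsilon}) = o(1)$, which proves the claim.

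I expect the main obstacle to lie in the second paragraph: controlling the approximation errors (from $1-x \approx e^{-x}$, from replacing $n_m$ by $n\pi_m$, and from the negligible $n_m$-versus-$n_m-1$ distinction) so that they contribute only $o(\log n)$ to the exponent and do not erode the strictly-larger-than-one rate, and verifying that the Chernoff exponent coincides exactly with $D_+(\theta_i,\theta_j)$ as defined in \eqref{eq:D_+-prob}. Any vanishing entries $q_{im}^{(\ell)} = 0$ are handled by the usual convention and only help, since such a label cannot appear between $v$ and a community-$m$ vertex, while forcing the corresponding contribution to $\langle W(j), d(v)\rangle$ for a wrong $j$ to $-\infty$.
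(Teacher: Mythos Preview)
Your argument is correct and follows essentially the same route as the paper: a Chernoff bound on $\langle W(i)-W(j),d(v)\rangle$, computation of the MGF via the product/multinomial structure over $u\neq v$, the replacement $n_m=(1+o(1))\pi_m n$, optimization over $\lambda\in[0,1]$ to produce the rate $tD_+(\theta_i,\theta_j)>1$, and a union bound over vertices and competing labels. One trivial slip: the map $\lambda\mapsto\sum_{m,\ell}(q_{im}^{(\ell)})^{1-\lambda}(q_{jm}^{(\ell)})^{\lambda}$ is \emph{convex}, not concave; the paper uses this convexity together with $f(0)=f(1)=1$ (via AM--GM) to show the optimizer is strictly interior (so $\lambda^\star>0$ and the Chernoff bound is nontrivial), which is the only point your sketch leaves implicit.
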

\begin{proof}
Recall that we condition on a fixed $\sigma^{\star}$ satisfying \eqref{eq:balanced-sigma}. It suffices to show that there exist $\delta, \eta > 0$ satisfying the following: for all $i,j \in [k]$ where $i \neq j$ and $v$ such that $\sigma^{\star}(v) = i$, we have
\begin{equation}
\langle (W(i) - W(j)) , d(v) \rangle > \delta \log n \label{eq:degree-profile}
\end{equation}
with probability $1-n^{-1-\eta}$.

We employ a Chernoff bound strategy to prove the desired claim. Fix distinct $i, j \in [k]$ and $v$ such that $\sigma^{\star}(v) = i$. For any $\lambda > 0$, the Markov inequality implies
\begin{align}
\mathbb{P}\left(\langle W(i) - W(j),  d(v)\rangle \leq \delta \log n \right) &= \mathbb{P}\left(e^{-\lambda \left(\langle W(i) - W(j), d(v)\rangle\right)} \geq e^{-\lambda \delta \log n} \right) \nonumber\\
&\leq n^{\lambda \delta} \mathbb{E}\left[e^{-\lambda \left(\langle W(i) - W(j),  d(v) \rangle\right)} \right]. \label{eq:Chernoff}
\end{align}
Next, observe that
\begin{align*}
\langle W(i) - W(j), d(v)\rangle &= \sum_{m \in [k]} \sum_{u \neq v : \sigma^{\star}(u) = m}\sum_{\ell \in [L]} \log\left(\frac{q_{im}^{(\ell)}}{q_{jm}^{(\ell)}}\right) \mathbbm{1}\{\mathcal{L}(v,u) = \ell\}.   
\end{align*}
Note that the innermost summation can be represented by a random variable $X_u$ which takes value $\log\left(\frac{q_{im}^{(\ell)}}{q_{jm}^{(\ell)}}\right)$ with probability $\frac{t \log n}{n} \cdot q_{im}^{(\ell)}$, for all $\ell \in [L]$, and with probability $1 - \frac{t \log n}{n}$ takes value $0$ (where $\sigma^{\star}(u) = m$). Due to independence of $\{\mathcal{L}(v,u) \}_{u \neq v}$ conditioned on the community assignments, we obtain
\begin{align*}
\mathbb{E}\left[e^{-\lambda \left(\langle W(i) - W(j) , d(v) \rangle\right)} \right] &= \prod_{m \in [k]} \left(1 - \frac{t \log n}{n} + \sum_{\ell \in [L]} \frac{t \log n}{n} \cdot q_{im}^{(\ell)} e^{-\lambda \log\left(\frac{q_{im}^{(\ell)}}{q_{jm}^{(\ell)}}\right)} 
  \right)^{n_m}\\
&=  \prod_{m \in [k]} \left(1 - \frac{t \log n}{n} + \frac{t \log n}{n}\sum_{\ell \in [L]} \left(q_{im}^{(\ell)} \right)^{1-\lambda} \left(q_{jm}^{(\ell)} \right)^{\lambda}\right)^{n_m}\\
&\leq \exp\left(-\frac{t \log n}{n}\sum_{m \in [k]} n_m \left(1- \sum_{\ell \in [L]} \left(q_{im}^{(\ell)} \right)^{1-\lambda} \left(q_{jm}^{(\ell)} \right)^{\lambda} \right)  \right),
\end{align*}
where we have used the inequality $1-x \leq e^{-x}$. Since $n_m = (1 + o(1)) \pi_m n$, we have
\begin{align*}
\mathbb{E}\left[e^{-\lambda \left(\langle W(i) - W(j) , d(v) \rangle\right)} \right] &\leq \exp\left(-(1+o(1))t \log (n) \sum_{m \in [k]} \pi_m \left(1-\sum_{\ell \in [L]} \left(q_{im}^{(\ell)} \right)^{1-\lambda} \left(q_{jm}^{(\ell)} \right)^{\lambda} \right) \right)\\
&=\exp\left(-(1+o(1))t \log (n) \left(1 -\sum_{m \in [k]} \pi_m \sum_{\ell \in [L]} \left(q_{im}^{(\ell)} \right)^{1-\lambda} \left(q_{jm}^{(\ell)} \right)^{\lambda} \right) \right).
\end{align*}
Let $\lambda^{\star} \in [0,1]$ be such that
\[D_+(\theta_i, \theta_j) = 1 -\sum_{m \in [k]} \pi_m \sum_{\ell \in [L]} \left(q_{im}^{(\ell)} \right)^{1-\lambda^{\star}} \left(q_{jm}^{(\ell)} \right)^{\lambda^{\star}} \]
(recalling \eqref{eq:D_+-prob}). We will set $\lambda = \lambda^{\star}$, which requires showing that $\lambda^{\star} \neq 0$. To this end, let $f(\lambda) = \sum_{m \in [k]} \pi_m \sum_{\ell \in [L]} \left(q_{im}^{(\ell)} \right)^{1-\lambda} \left(q_{jm}^{(\ell)} \right)^{\lambda}$, and observe that by the inequality of arithmetic and geometric means, we have that $f(1/2) < \frac{1}{2}\left(f(0) + f(1) \right)$ and at the same time $f(0) = f(1) = 1$. It follows that $\lambda^{\star} \neq 0$.

Substituting $\lambda = \lambda^{\star}$ into \eqref{eq:Chernoff}, we obtain
\begin{align}
\mathbb{P}\left(\langle W(i) - W(j), d(v)\rangle \leq \delta \log n \right)&\leq n^{\lambda^{\star} \delta} n^{-(1+o(1)) t D_+(\theta_i, \theta_j)} \label{eq:Chernoff-bound}
\end{align}
Since $t > t_c$, it follows that $t D_+(\theta_i, \theta_j) > 1$. Therefore, we can choose $\delta, \eta > 0$ such that \eqref{eq:Chernoff-bound} is less than $n^{-1-\eta}$ for $n$ sufficiently large. While the choice of $\delta, \eta$ is specific to the fixed pair $(i,j)$, we can take the minimum over all such pairs to ensure that \eqref{eq:degree-profile} holds simultaneously for all such pairs.
\end{proof}

\begin{proof}[Proof of Theorem \ref{theorem:spectral}]
Lemma \ref{lemma:entrywise} implies that for all $\ell \in [L], i \in [k]$ we have
\[\min_{s \in \{\pm 1\}} \left \Vert s u_i^{(\ell)} - \frac{A^{(\ell)} u_i^{\star (\ell)}}{\lambda_i^{\star (\ell)}} \right \Vert_{\infty} \leq \frac{C}{\log \log(n) \sqrt{n}}\]
with probability $1-O(n^{-3})$.

Using the triangle inequality, we see that there exist $\{s_j^{(\ell)}\}_{j \in [k], \ell \in [L]} \in \{\pm 1\}^{kL}$ such that for any $i \in [k]$,
\[\left \Vert \sum_{\ell \in [L]} \sum_{j \in [k]}  c_{ij}^{(\ell)}\left(s_j^{(\ell)}u_j^{(\ell)} - \frac{A^{(\ell)} u_j^{\star (\ell)}}{\lambda_j^{\star (\ell)}} \right) \right \Vert_{\infty} \leq \frac{C'}{\log \log(n) \sqrt{n}},\]
where the weights $\{c_{ij}^{(\ell)}\}_{i,j \in [k], \ell \in [L]}$ are computed in Line \ref{step:weights} of Algorithm \ref{alg:spectral}. It follows that for some choice of $\{s_j^{(\ell)}\}_{j \in [k], \ell \in [L]} \in \{\pm 1\}^{kL}$, we have that
\begin{align}
\left\Vert \sum_{\ell=1}^L\left(U^{(\ell)}  D^{\sss (s^{(\ell)})}  c_i^{(\ell)}\right) -\sum_{\ell \in [L]} A^{(\ell)} \sum_{j \in [k]} \frac{c_{ij}^{(\ell)} u_j^{\star (\ell)}}{\lambda_j^{\star (\ell)}} \right\Vert_{\infty} &\leq \frac{C'}{\log \log (n) \sqrt{n}} \label{eq:entrywise-alg}    
\end{align}
with probability $1-O(n^{-3})$.

Next, consider the scaled vector \[x_i^{(\ell)} := \sqrt{n} \log(n) \sum_{j \in [k]} \frac{c_{ij}^{(\ell)} u_j^{\star (\ell)}}{\lambda_j^{\star (\ell)}},\] which takes a block form (i.e. $x_i^{(\ell)}$ is constant over $\{v : \sigma^{\star}(v) = j\}$, for all $j \in [k]$). 
The matrix $B^{(\ell)}$ constructed in Algorithm \ref{alg:find-weights} is equal to $\mathbb{E}[A^{(\ell)} \mid \sigma^{\star}]$ up to fluctuations in community sizes and permutation of the rows and columns. A straightforward perturbation argument (see \cite[Lemma 5.3]{Dhara2022a}) implies that $x_{i,v}^{(\ell)}  = (1+o(1)) \log \left(q^{(\ell)}_{ij}\right)$ for all $v$ with $\sigma^{\star}(v) = j$.

It follows that for all $v \in V$
\begin{align*}
\sqrt{n} \log(n) \left(\sum_{\ell \in [L]} A^{(\ell)} \sum_{j \in [k]} \frac{c_{ij}^{(\ell)} u_j^{\star (\ell)}}{\lambda_j^{\star (\ell)}}\right)_v &= (1+o(1)) \left(\sum_{\ell \in [L]} A^{(\ell)}x_i^{(\ell)}\right)_v = (1+o(1))\sum_{\ell \in [L]} \sum_{w \neq v} A^{(\ell)}_{vw} x_{iw}^{(\ell)}\\
&= (1+o(1)) \sum_{\ell \in [L]} \sum_{j \in [k]}  \sum_{w \neq v} \mathbbm{1}\{\sigma^{\star}(w) = j, \mathcal{L}(v,w) = \ell\} \log \left(q_{ij}^{(\ell)} \right)\\
&= (1+o(1)) \langle W(i), d(v) \rangle,
\end{align*}
where the second-last step is due to Step \ref{step:z} of Algorithm \ref{alg:find-weights}, and $W(i)$ is as in Lemma \ref{lemma:concentration}.

Substituting back into \eqref{eq:entrywise-alg}, we see that for all $v \in V$
\begin{align*}
\left| \sqrt{n} \log(n)\sum_{\ell=1}^L\left(U^{(\ell)}  D^{\sss (s^{(\ell)})}  c_i^{(\ell)}\right) - (1+o(1)) \langle W(i), d(v) \rangle \right| &\leq \frac{C' \log n}{\log \log n} = o(\log n). 
\end{align*}
Lemma \ref{lemma:concentration} then guarantees that \[\argmax_{j \in [k]} \sum_{\ell=1}^L\left(U^{(\ell)}  D^{\sss (s^{(\ell)})}  c_j^{(\ell)}\right) = i\]
with probability $1-o(1/n)$.

Finally, since the correct signs $\{s^{\ell}\}_{\ell  = 1}^{L} \subset \mathbb{R}^k$ are unknown, we construct all possible labelings for the different sign combinations in Step \ref{step:argmax} of Algorithm \ref{alg:spectral}, by choosing the resulting labeling with largest posterior probability. Since we have shown that the correct labeling is among these candidates, and the MAP estimator succeeds above the threshold due to Theorem \ref{theorem:IT}, it follows that $\hat{\sigma}$ exactly recovers the community partition, with high probability.

\end{proof}

\section{Conclusion}
\label{sec:conclusion}
In this paper, we proposed a simple spectral algorithm for exact recovery in the LSBM, and showed that it achieves the information-theoretic threshold for nearly all parameters (namely, whenever $Q^{(\ell)} \cdot \diag(\pi)$ has $k$ distinct, nonzero eigenvalues for all $\ell \in [L]$). The result generalizes the case $L=2$ covered by \cite{Dhara2022c}, under the stated assumption. It may be possible to relax this condition by using a non-binary encoding for each matrix $A^{(\ell)}$, using up to $L+1$ numeric values in each matrix, one for each label.

An interesting question is whether $L$ matrices are necessary for achieving exact recovery down to the information-theoretic threshold. We expect that for all but a measure-zero set of parameters, $L$ matrices are necessary. Indeed, such a result is known for the case $L = 2$ \cite[Theorem 7 (2)]{Dhara2022c}.

\section*{Acknowledgments}
J.G. and H.L. were partially supported by NSF CCF-2154100. Part of this work was completed while H.L. was at the Department of Industrial Engineering and Management Sciences, Northwestern University.

\bibliography{references}
\bibliographystyle{plain}

\section*{Appendix}
We now show how Theorem \ref{theorem:IT} follows from \cite[Theorem 3]{Yun2016}. Note that \cite{Yun2016} uses a more general parameterization, letting $p_{ij}^{(\ell)}$ be the probability that two vertices from communities $i,j$ have label $\ell \in \{0,1, \dots, L\}$. Our parameterization takes $p_{ij}^{(0)} = 1 - \frac{t \log n}{n}$ and $p_{ij}^{(\ell)} = \frac{t \log n}{n} \cdot q_{ij}^{(\ell)}$ for $\ell \in [L]$. We continue to use $\pi \in \mathbb{R}^k$ to denote the community label distribution. 

The result of \cite{Yun2016} is stated in terms of a different divergence $D(\pi, p)$, which is related to the CH divergence. We use $\mathcal{P}^{a \times b}$ to denote the set of $a \times b$ matrices with each row representing a probability distribution. Let $p_{ij}^{(\cdot)} = \left(p_{ij}^{(0)}, p_{ij}^{(1)}, \dots, p_{ij}^{(L)} \right)$ represent the label distribution for two vertices in communities $i,j \in [k]$. Similarly, let $p_i$ denote the $k \times (L+1)$ matrix whose $j,\ell$ entry is $p_{ij}^{(\ell)}$.
\begin{definition} 
For $i,j \in [k]$, let
\small
\begin{align*}
D_{L+}(\pi, p_i, p_j) = \inf_{y \in \mathcal{P}^{K \times (L+1)}} \max \left\{\sum_{m=1}^k \pi_m D_{KL}\left(y(k,\cdot) \Vert p_{im}^{(\cdot)} \right), \sum_{m=1}^k \pi_m D_{KL}\left(y(k,\cdot)\Vert p_{jm}^{(\cdot)} \right)  \right\},  
\end{align*}
\normalsize
where $D_{KL}(\cdot \Vert \cdot)$ is the KL divergence; i.e., $D_{KL}(P \Vert Q) = \sum_{x \in \mathcal{X}} P(x) \log \left(\frac{P(x)}{Q(x)} \right)$ for discrete distributions $P,Q$ with support $\mathcal{X}$.
Define
\[D(\pi,p) = \min_{i \neq j} D_{L+}(\pi, p_i, p_j).\]
\end{definition}

Finally, the result is stated in terms of the following assumptions.
\begin{enumerate}
    \item\label{assumption1} There exists $\eta >0$ such that for all $i,j,m \in [k]$ and $\ell \in \{0,1, \dots, L\}$, 
    \[\nicefrac{p_{ij}^{(\ell)}}{p_{im}^{(\ell)}} \leq \eta\]
    \item\label{assumption2} There exists $\epsilon > 0$ such that for all $i,j \in [k]$ with $i \neq j$,
    \[\sum_{m=1}^k\sum_{\ell = 1}^L \left(p_{im}^{(\ell)} -p_{jm}^{(\ell)}\right)^2 \geq \epsilon \overline{p}^2,\]
    where $\overline{p} = \max_{i,j \in [k], \ell \in [L]} p_{ij}^{(\ell)}$.
    \item\label{assumption3} There exists $\kappa > 0$ such that for all $i,j \in [k], \ell \in [L]$, it holds that
    \[n p_{ij}^{(\ell)} \geq (n\overline{p})^{\kappa}.\]
\end{enumerate}
\begin{theorem}[Theorem 3, \cite{Yun2016}]\label{theorem:IT-prior-work}
Consider the LSBM with parameters $\pi \in \mathbb{R}^k$ and $\{p_{ij}^{(\ell)}: i,j \in [k], \ell \in \{0, 1, \dots, L\}\}$. Suppose Assumptions \ref{assumption1} and \ref{assumption2} hold. If the parameters $\pi, p$ are such that $\liminf_{n \to \infty} \frac{n D(\pi,p)}{\log n} < 1$, then no algorithm achieves exact recovery. Moreover, if $\frac{n D(\pi,p)}{\log n} < 1-\delta$ for some $\delta > 0$, then any algorithm fails to identify the community partition with probability at least $1 - n^{-\frac{\delta}{4}}$.

Conversely, if $\liminf_{n \to \infty} \frac{n D(\pi,p)}{\log n} \geq 1$ and Assumptions \ref{assumption1}-\ref{assumption3} hold, then the Spectral Partition Algorithm \cite{Yun2016} achieves exact recovery. 
\end{theorem}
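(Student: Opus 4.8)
The statement is a two-sided characterization, so the plan is to treat the impossibility and achievability directions separately, both organized around the single-vertex hypothesis test that distinguishes membership in community $i$ versus community $j$ given the labels to all other (correctly classified) vertices. Concretely, the genie assigns vertex $v$ the community maximizing the local log-likelihood score $\sum_{m\in[k]}\sum_{\ell}\log(p_{im}^{(\ell)})\,|\{u\neq v:\sigma^{\star}(u)=m,\ \mathcal{L}(v,u)=\ell\}|$, and the whole analysis is driven by the large-deviation rate at which this test fails.

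For the impossibility direction I would use a genie-aided lower bound: any estimator's error probability is bounded below by that of the optimal estimator that is told $\sigma^{\star}(u)$ for all $u\neq v$. The first task is to show, via Sanov's theorem and a Chernoff-information computation, that the probability the genie confuses communities $i$ and $j$ decays like $n^{-(1+o(1))\,nD_{L+}(\pi,p_i,p_j)/\log n}$; here the minimax form $\inf_y\max\{\cdot,\cdot\}$ of $D_{L+}$ arises precisely as the optimal tilted profile $y$ that is KL-equidistant from the two hypotheses, so the common value is the Chernoff–Hellinger exponent in the sense of \cite{Abbe2015}. When $\liminf_n nD(\pi,p)/\log n<1$, this per-vertex failure probability exceeds $n^{-1+\Omega(1)}$, whence the expected number of genie-misclassified vertices tends to infinity. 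A second-moment argument then upgrades this to existence of a bad vertex with probability $1-o(1)$: the failure events for two vertices $v,v'$ depend on disjoint sets of edge labels except for the single shared label $\mathcal{L}(v,v')$, so the pairwise correlation is negligible and $\mathrm{Var}$ is $o(\mathbb{E}[\cdot]^2)$. Tracking the polynomial gap when $nD(\pi,p)/\log n<1-\delta$ yields the quantitative $1-n^{-\delta/4}$ failure bound.

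For the achievability direction I would use the standard two-phase scheme. Phase one runs the Spectral Partition Algorithm on a matrix representation of the labeled graph to obtain an \emph{almost-exact} clustering, agreeing with $\sigma^{\star}$ on all but $o(n)$ vertices: Assumptions \ref{assumption2} and \ref{assumption3} furnish the spectral gap and signal strength needed for a concentration and Davis--Kahan argument, while Assumption \ref{assumption1} keeps the label likelihood ratios bounded. Phase two is a local refinement in which each vertex is reclassified by the MAP test above, but using the Phase-one labels in place of the unknown true ones. When $\liminf_n nD(\pi,p)/\log n\geq 1$ the genie test's per-vertex error is at most $n^{-(1+o(1))}$, so a union bound over the $n$ vertices gives zero refinement errors with high probability.

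The main obstacle is the coupling between the two phases in the achievability direction: the refinement test is only genuinely a genie test when fed the \emph{true} labels, whereas Phase one supplies labels that are wrong on a vanishing but nonzero set. The crux is therefore a robustness argument showing that the $o(n)$ misclassified neighbors perturb each vertex's score by only $o(\log n)$, so that the sign of every pairwise comparison between communities $i$ and $j$ is preserved; this needs Assumption \ref{assumption1} to bound the contribution of each wrong neighbor and a careful accounting that the perturbation does not accumulate across the $\binom{k}{2}$ comparisons. A secondary technical point is verifying that $D_{L+}$ is simultaneously the correct exponent for both sides, i.e. that the $\inf$--$\max$ value coincides with the Chernoff information of the two label-profile distributions, so that the hardest-case tilt in the lower bound and the MAP test in the upper bound meet at the same rate.
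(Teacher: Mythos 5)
This statement is quoted, not proved, in the paper: it is Theorem 3 of Yun and Proutiere \cite{Yun2016}, imported as a black box, and the paper's appendix only \emph{uses} it (together with Claim 4 of \cite{Yun2016}, which relates $D_{L+}$ to the CH divergence) to deduce Theorem \ref{theorem:IT}. So there is no internal proof to compare against; your proposal has to be judged against the cited work. At that level, your outline does track the actual architecture of the Yun--Proutiere argument: an impossibility proof driven by the genie-aided single-vertex test whose error exponent is the $\inf$--$\max$ divergence $D_{L+}$ (their Claim 4 is precisely the identification of this minimax value with the Chernoff--Hellinger exponent you describe), and an achievability proof consisting of a spectral partition yielding almost-exact recovery followed by local likelihood refinement, with Assumption \ref{assumption1} controlling the log-likelihood-ratio contribution of misclassified neighbors.

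That said, two genuine gaps remain in your sketch. First, your achievability step ``per-vertex error at most $n^{-(1+o(1))}$, so a union bound over the $n$ vertices gives zero refinement errors'' breaks exactly at the boundary the theorem covers: the hypothesis is $\liminf_n nD(\pi,p)/\log n \geq 1$ \emph{with equality allowed}, and then $n \cdot n^{-(1+o(1))} = n^{o(1)}$ need not be $o(1)$. As written, your argument only proves achievability for strict inequality; the cited proof needs a finer analysis of the refinement (an iterative shrinking of the misclassified set, not a one-shot union bound) to reach the threshold itself. Second, the phase-coupling issue you rightly call the crux is asserted rather than resolved, and the difficulty is worse than you state: the $o(n)$ misclassified set produced by phase one is \emph{correlated with the edge labels}, so ``each vertex has only $o(\log n)$ labeled edges into the bad set'' does not follow from an expectation computation and can fail for unfavorably placed bad sets. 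One needs either a uniform bound over all $o(n)$-sized vertex subsets (controlling vertices with atypically many labeled neighbors in any small set) or a sample-splitting device that decouples the two phases. Relatedly, the quantitative bound $1-n^{-\delta/4}$ in the converse requires making explicit the reduction from arbitrary estimators to the genie (via optimality of MAP and the observation that a genie failure at any vertex already defeats MAP), plus variance bookkeeping beyond the one-line ``pairwise correlation is negligible'' remark; your second-moment skeleton is plausible but does not yet extract the stated polynomial rate.
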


Finally, we prove Theorem \ref{theorem:IT} in light of Theorem \ref{theorem:IT-prior-work}.
\begin{proof}[Proof of Theorem \ref{theorem:IT}]
We first show that Assumptions \ref{assumption1}-\ref{assumption3} hold trivially under our parameterization. Observe that Assumption \ref{assumption1} holds with \[\eta = \max_{i,j,m \in [k], \ell \in [L]} \frac{q_{ij}^{(\ell)}}{q_{im}^{(\ell)}}.\]  
Similarly, Assumption \ref{assumption2} holds with
\[\epsilon = \left(\max_{i,j \in [k], \ell \in [L]} q_{ij}^{(\ell)} \right)^{-2} \cdot \min_{i,j \in [k], i \neq j} \sum_{m=1}^k\sum_{\ell = 1}^L \left(q_{im}^{(\ell)} -q_{jm}^{(\ell)}\right)^2. \]
Finally, Assumption \ref{assumption3} holds with any $\kappa \in (0,1)$ for $n$ sufficiently large.

Next, we invoke \cite[Claim 4]{Yun2016} to conclude that for all $i,j \in [k]$,
\begin{align*}
D_{L+}(\pi, p_i, p_j) &\sim \frac{t \log n}{n} \sup_{\lambda \in [0,1]} \sum_{\ell =1}^L \sum_{m=1}^k \pi_m \left((1-\lambda) q_{im}^{(\ell)} + \lambda q_{jm}^{(\ell)} - (q_{im}^{(\ell)})^{1-\lambda} (q_{jm}^{(\ell)})^{\lambda}\right)\\
&= \frac{t\log n}{n} D_{+}(\theta_i,\theta_j).
\end{align*}
Here $a_n \sim b_n$ means $\lim_{n\to \infty} \frac{a_n}{b_n} = 1$.
Observe that if 
\[t < \frac{1}{\min_{i \neq j} D_+(\theta_i, \theta_j)}~~~ (= t_c),\]
then for some $i\neq j$ and $\delta > 0$ we have $t D_+(\theta_i,\theta_j) < 1-\delta$. It follows that for all $n$ sufficiently large, $ \frac{n D(\pi,p)}{\log n}< 1-\frac{\delta}{2}$. Theorem \ref{theorem:IT-prior-work} then implies that any estimator fails with probability $1-o(1)$.

On the other hand, if
\[t > \frac{1}{\min_{i \neq j} D_+(\theta_i, \theta_j)},\]
then $t D_+(\theta_i,\theta_j) > 1$ for all $i\neq j$. It follows that $\liminf_{n \to \infty} \frac{n D(\pi,p)}{\log n}> 1$, and so the second part of Theorem \ref{theorem:IT-prior-work} implies that the MAP estimator succeeds in exact recovery.

\end{proof}

\end{document}